\theoremstyle{plain}
\newtheorem{theorem}{Theorem}[section]
\newtheorem{lemma}[theorem]{Lemma}
\theoremstyle{definition}
\theoremstyle{remark}
\title{Partial Inverse Spectral Problems for Sturm-Liouville Operators with Frozen Arguments on a Star-Shaped Graph}
\date{June 25, 2025}
\author[1]{\small Chung-Tsun Shieh}
\author[2]{\small Tzong-Mo Tsai}
\author[3]{\small Meng-Nien Wu} 
\affil[1]{\footnotesize Department of Applied Mathematics and Data Science, Tamkang University, New Taipei City, 25137, Taiwan\\ctshieh@mail.tku.edu.tw}
\affil[2]{\footnotesize General Education Center, Ming Chi University of Technology, New Taipei City, 24301, Taiwan\\tsaitm@mail.mcut.edu.tw}
\affil[3]{\footnotesize Department of Applied Mathematics and Data Science, Tamkang University, New Taipei City, 25137, Taiwan \\mwu@mail.tku.edu.tw}
\begin{document}
\maketitle
\begin{abstract}
This paper is devoted to the study of a partial inverse spectral problem for Sturm–Liouville operators with frozen arguments on a star-shaped graph. The potentials are assumed to be known a priori on all edges except one, and the objective is to reconstruct the unknown potential on the remaining edge using a subset of the spectral data. A constructive algorithm for solving this problem is presented, which relies on the Riesz basis property of a system of vector functions.
\end{abstract}
\medskip
\noindent{\it Key words}: partial inverse problem, Sturm-Liouville operator, differential operator on graph, frozen argument, Riesz basis, eigenvalue.

\medskip
\noindent{\it 2010 Mathematics Subject Classification}: 34A55 34K29 34B45 65L03
\\

\section{Introduction}
In this work, the authors investigate the inverse spectral problem for the nonlocal Sturm-Liouville equation on a star-shaped graph.
Let ${G_p}$ be a star-shaped graph consisting of $p$ equilateral edges $\{e_j\}_{j=1}^p$, one inner vertex $V_0$ and $p$ outer vertices $\{V_j\}_{j=1}^p$, where each $V_j$ is connected to  $V_0$ via  $e_j$. For  convenience, we identify each   $e_j$ with the interval $[0,\pi],$ and $0$ is assigned to the vertex $V_j,\; j=1,2,\dots, p; $  $\pi$ to the inner vertex $V_0.$ Each edge $e_j,\; 1\leq j\leq p-1$ is associated with an ordinary Sturm-Liouville operator, defined as
   \begin{equation}\label{E1}
   	l_{j}(y_{j})(x)=-y_{j}''(x)+q_{j}(x)y_{j}(x), 
   \end{equation}
   or a nonlocal Sturm-Liouville operator with frozen arguments, 
   \begin{equation}\label{E2}
   l_{j}(y_{j})(x)=-y_{j}''(x)+q_{j}(x)\left(\sum_{k=1}^{n_j} y_{j}(a_{k,j})\right), 
   \end{equation}
    and for the last edge $e_{p},$ the differential operator is defined as
   \begin{equation}\label{E3}
   l_{p}(y_{p})(x)=-y_{p}''(x)+q_{p}(x)\left(\sum_{k=1}^{n_p} y_{p}(a_{k,p})\right),
   \end{equation}
   where $F_{j}:=\{a_{k,j}|0<a_{1,j}<...<a_{n_{j},j}<\pi\}$ represents the set of frozen arguments on edge $e_j$  and $q_{j}(x),$ the potential on $e_j,$ is a real-valued $L^2$-function on $(0,\pi)$ for $1\leq j\leq p.$  
   The primary issue addressed is an inverse spectral problem associated with the differential system:
      \begin{equation}\label{E4}
   \begin{pmatrix}
   	l_1(y_1)\\
   	l_2(y_2)\\
   	\vdots\\
   	l_p(y_p)
   \end{pmatrix}(x)=\lambda \begin{pmatrix}
   	y_1\\
   	y_2\\
   	\vdots\\
   	y_p
	\end{pmatrix}(x),
   \end{equation}
   associated with boundary conditions at outer vertices, i.e.,
   \begin{equation}\label{E5}
   y_{1}^{(\alpha_1)}(0)=y_{2}^{(\alpha_2)}(0)=\cdots=y_{p}^{(\alpha_p)}(0)=0,\;\; \alpha_j \in \{0,1\},
   \end{equation}
   and the Kirchhoff's conditions at inner vertex $V_0$, i.e.,
   \begin{equation}\label{E6}
   \; y_1(\pi)=y_2(\pi)=\cdots= y_{p}(\pi)\;  \text{\,and\,} \; \sum_{j=1}^p y'_j(\pi)=0.
   \end{equation}
For brevity, we denote the problem  \eqref{E4}-\eqref{E6} as $L_{G_p}(Q_1,Q_2,\cdots,Q_p),$  where $Q_j$ denotes the pair $(q_j,F_j).$ According to our convention, $l_j$ refers to  the ordinary Sturm-Liouville operator given by \eqref{E1} when  $F_j=\emptyset,$ whereas it represents the nonlocal Sturm-Liouville operator described in  \eqref{E2} otherwise.\\

The nonlocal Sturm-Liouville equation of the form  $l_p(y_p)=\lambda y_p$ on an interval arises in various applications and belongs to the class of so-called loaded equations \cite{Nakh12, app1}.  Such  equations have been widely employed in modeling groundwater dynamics  \cite{Nakh82, NakhBor77}, heating processes \cite{Isk, Nakh76}, and feedback-like phenomena, including  the vibrations of a wire influenced by a magnetic field \cite{Kral, BH21}. The direct and inverse spectral problems associated with $l_p(y_p)=\lambda y_p$ under  boundary conditions $y_{p}^{(\alpha)}(0)=y_{p}^{(\beta)}(\pi)=0$ have been  extensively studied  for  nonlocal Sturm-Liouville operator with a single frozen argument \cite{BBV, BV, BK,Wang20, KuzIrr, Kuz22,TLBCS}.  More recently, results concerning the spectral analysis of nonlocal Sturm-Liouville operators with multiple frozen arguments were established in \cite{ST}.

In this paper, the authors  focus on the partial inverse problem for Sturm-Liouville operators on a star-shaped graph. Specifically, we consider the case where the potential function is known on certain edges and needs to be reconstructed on the remaining ones. Previous studies have investigated inverse spectral problems for ordinary Sturm-Liouville operators on graphs. For instance, in \cite{Yurk05, Yang2010, Bon2018}, it was demonstrated that a  part of the spectrum suffices to determine the potential on a single edge of a star-shaped graph when the potential on the other edges is known. However, the inverse problem for nonlocal Sturm-Liouville operators on graphs remains unexplored. To date, the only known studies in this direction are \cite{Ni12}, which examined an inverse problem for a specific class of nonlocal Sturm-Liouville operators on graphs, and \cite{Hu20}, where the authors derived trace formulas for such operators. Beyond these works, little progress has been made, leaving significant room for further investigation in this area.

The remainder of this paper is organized as follows. In Section 2, we introduce the fundamental concepts and preliminary results necessary for our analysis. In Section 3, we establish a uniqueness theorem for the inverse problem and present a detailed reconstruction algorithm for recovering the unknown potential function.

\section{Preliminaries}
In this section, we present the preliminary results necessary for analyzing the characteristic function of problem \eqref{E4}-\eqref{E6}. To achieve this, we first determine the solutions on each edge. We introduce the notation $\lambda=\rho^2$ and $\tau=\text{Im}\rho.$ For the ordinary Sturm-Liouville equation, the fundamental solutions are given by:
\begin{lemma}[Chapter 1, \cite{FY01}] Denote $S(x,\lambda)$ 	and $C(x,\lambda)$ the solutions of
\begin{equation}\label{E7}
-y''(x)+q(x)y(x)=\lambda y(x),\; 0<x<\pi, 	
\end{equation}
 which satisfy the initial conditions $S(0,\lambda)=C'(0,\lambda)=0$ and $S'(0,\lambda)=C(0,\lambda)=1.$ Then
\begin{align}
S(x,\lambda)&=\dfrac{\sin \rho x}{\rho}+\int_0^xq(t)\dfrac{\sin \rho(x-t)}{\rho }S(t,\lambda)\, dt =\dfrac{\sin\rho x}{\rho}+O(\dfrac{e^{|\tau x|}}{|\rho|^2}), \label{E8}\\
S'(x,\lambda)&=\cos\rho x+\int_0^x q(t)\cos \rho(x-t)S(t,\lambda)\, dt =\cos \rho x+O(\dfrac{e^{|\tau x|}}{|\rho|}), \label{E9}\\
C(x,\lambda)&=\cos\rho x +\int_0^x q(t)\dfrac{\sin \rho(x-t)}{\rho}C(t,\lambda)\, dt =\cos\rho x +O(\dfrac{e^{|\tau x|}}{|\rho|}),\label{E10}\\
C'(x,\lambda)&=-\rho \sin\rho x+ \int_0^x q(t)\cos \rho(x-t)C(t,\lambda)\, dt = -\rho \sin\rho x+O(|\rho|e^{|\tau x|}).\label{E11}
\end{align}
\end{lemma}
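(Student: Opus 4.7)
The plan is to obtain the four integral representations by converting the Sturm--Liouville equation into a Volterra integral equation via variation of parameters, and then to read off the asymptotic estimates by iterating that Volterra equation and majorizing the resulting Neumann series.

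First, I would rewrite $-y''+q(x)y=\lambda y$ as $y''+\rho^2 y = q(x)y$ and treat the right-hand side as an inhomogeneity. Since $\sin(\rho x)/\rho$ and $\cos(\rho x)$ solve the free equation $y''+\rho^2 y=0$ with Wronskian equal to $1$, the variation of parameters formula produces, for any solution $y$ of the perturbed equation,
\[
y(x) = A\,\tfrac{\sin\rho x}{\rho} + B\cos\rho x + \int_0^x q(t)\,\tfrac{\sin\rho(x-t)}{\rho}\,y(t)\,dt.
\]
Imposing $S(0,\lambda)=0$, $S'(0,\lambda)=1$ forces $A=1$, $B=0$ and gives \eqref{E8}; imposing $C(0,\lambda)=1$, $C'(0,\lambda)=0$ gives \eqref{E10}. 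Formulas \eqref{E9} and \eqref{E11} then follow by differentiating under the integral sign, using that the boundary term from $\sin\rho(x-t)$ at $t=x$ vanishes.

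Next, I would extract the asymptotics by Picard iteration on \eqref{E8}. Define $S_0(x,\lambda)=\sin(\rho x)/\rho$ and
\[
S_{n+1}(x,\lambda) = \int_0^x q(t)\,\tfrac{\sin\rho(x-t)}{\rho}\,S_n(t,\lambda)\,dt.
\]
Using the elementary bound $|\sin\rho z/\rho|\le C\,e^{|\tau|z}/(|\rho|+1)$, an induction shows
\[
|S_n(x,\lambda)| \le \frac{e^{|\tau|x}}{|\rho|^{n+1}}\cdot\frac{1}{n!}\Bigl(\int_0^x|q(t)|\,dt\Bigr)^n,
\]
so the series $\sum_n S_n$ converges absolutely and uniformly on $[0,\pi]$ for $|\rho|\ge 1$, its sum solves the integral equation, and uniqueness of Volterra solutions on a bounded interval identifies it with $S(x,\lambda)$. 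Since the tail $\sum_{n\ge 1}S_n$ is bounded by a constant multiple of $e^{|\tau|x}/|\rho|^2$, this yields the stated remainder in \eqref{E8}. Differentiating the series term-by-term (justified by uniform convergence on compacta in $x$) gives \eqref{E9}; the analogous argument applied to $C$ produces \eqref{E10}--\eqref{E11}, with the leading term shifted by one power of $\rho$ because $C_0=\cos\rho x$ is $O(e^{|\tau|x})$ rather than $O(e^{|\tau|x}/|\rho|)$.

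The main technical point is the uniformity of the Neumann-series bound in $\rho$; here the key observation is that the factor $1/\rho$ from the Green's kernel $\sin\rho(x-t)/\rho$ accumulates multiplicatively in each Picard iterate, so no matter how large $|\tau|$ grows, each successive correction is damped by an additional power of $1/|\rho|$. Because $q\in L^2(0,\pi)\subset L^1(0,\pi)$, the factorial in the denominator dominates $(\int_0^\pi|q|)^n$, giving absolute convergence and the exact decay rates claimed in \eqref{E8}--\eqref{E11}.
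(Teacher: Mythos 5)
Your proposal is correct and follows essentially the same route as the paper's source: the paper gives no proof of this lemma, citing it directly from Chapter 1 of \cite{FY01}, and the argument there is precisely your variation-of-parameters reduction to a Volterra equation followed by Picard iteration with the bound $|\sin\rho z|\le e^{|\tau|z}$ and the factorial majorization $\frac{1}{n!}\bigl(\int_0^x|q|\,dt\bigr)^n$ of the Neumann series. Your estimates (including the term-by-term differentiation and the one-power-of-$\rho$ shift for $C$, which in fact yields the slightly sharper remainder $O(e^{|\tau|x})$ in \eqref{E11}) are all sound, so there is nothing to correct.
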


For the nonlocal problem, the solution is more complex due to the absence of a uniqueness theorem.
\begin{lemma} Let $0<a_i<\pi,\;i=1,2,\dots,n.$ For a given $\lambda\in \mathbb C\setminus\{0\},$ there exist solutions $S_F(x,\lambda)$ and $C_F(x,\lambda)$ of the equation
\begin{equation}\label{E12}
-y''(x)+q(x)\sum_{k=1}^{n}y(a_k)=\lambda y(x),
\end{equation}
which satisfy the integral equations
\begin{align}
S_F(x,\lambda)&=A(\lambda)\dfrac{\sin \rho x}{\rho }	+\sum_{k=1}^n S_F(a_k,\lambda)\int_0^x q(t)\dfrac{\sin \rho(x-t)}{\rho}\, dt, \label{E13}\\
C_F(x,\lambda)&=B(\lambda)\cos \rho x	+\sum_{k=1}^n C_F(a_k,\lambda)\int_0^x q(t)\dfrac{\sin \rho(x-t)}{\rho}\, dt,\label{E14}
\end{align}
and
\begin{align}
S'_F(x,\lambda)&=A(\lambda)\cos \rho x +\sum_{k=1}^n S_F(a_k,\lambda)\int_0^x q(t)\cos  \rho(x-t)\, dt,\label{E15} \\
C'_F(x,\lambda)&=-B(\lambda)\rho \sin  \rho x +\sum_{k=1}^n C_F(a_k,\lambda)\int_0^x q(t)\cos  \rho(x-t)\, dt,\label{E16}
\end{align}
where $S_F(0,\lambda)=C'_F(0,\lambda)=0,$\; $A(\lambda)$ and $B(\lambda)$ are constants. 
\end{lemma}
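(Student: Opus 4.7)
The plan is to reduce \eqref{E12} to an inhomogeneous linear equation with a single scalar unknown, and then to solve a one-dimensional algebraic compatibility condition for that unknown.

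The key observation is that for any solution $y$ of \eqref{E12} the quantity $\sigma := \sum_{k=1}^{n} y(a_k)$ is a \emph{constant}, so \eqref{E12} rewrites as the classical inhomogeneous equation $y''+\lambda y = \sigma q$. By variation of parameters with the Green kernel $\sin\rho(x-t)/\rho$, the general solution satisfying $y(0)=0$ is
\[
y(x)=A\,\frac{\sin\rho x}{\rho}+\sigma\int_0^x q(t)\,\frac{\sin\rho(x-t)}{\rho}\,dt,
\]
with $A=y'(0)$ free, while the general solution satisfying $y'(0)=0$ is
\[
y(x)=B\cos\rho x+\sigma\int_0^x q(t)\,\frac{\sin\rho(x-t)}{\rho}\,dt,
\]
with $B=y(0)$ free. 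Reinstating $\sigma=\sum_{k} y(a_k)$, each formula already matches the shape of \eqref{E13} (resp.\ \eqref{E14}), and differentiating once would yield \eqref{E15} and \eqref{E16} as routine consequences.

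It then remains to show that $A(\lambda)$, $B(\lambda)$ and the constant $\sigma$ can be chosen so that $\sigma = \sum_{k} y(a_k)$ is actually self-consistent. Evaluating the $S_F$-formula at $x=a_m$ and summing over $m$ reduces this to the single scalar equation
\[
\sigma\!\left(1-\sum_{m=1}^{n}\int_0^{a_m}q(t)\,\frac{\sin\rho(a_m-t)}{\rho}\,dt\right)=A(\lambda)\sum_{m=1}^{n}\frac{\sin\rho a_m}{\rho}.
\]
A single linear equation in two unknowns is always solvable, and a uniform choice valid for every $\lambda\in\mathbb{C}\setminus\{0\}$ is
\[
A(\lambda):=1-\sum_{m=1}^{n}\int_0^{a_m}q(t)\,\frac{\sin\rho(a_m-t)}{\rho}\,dt,\qquad \sigma:=\sum_{m=1}^{n}\frac{\sin\rho a_m}{\rho}.
\]
Both expressions are entire in $\lambda$, and substituting them back produces the sought $S_F$. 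An identical argument applied to the $C_F$-formula, yielding the compatibility relation with $\sum_m\cos\rho a_m$ on the right-hand side, determines $B(\lambda)$ and $C_F$ in the same way.

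The main obstacle is precisely this compatibility step: if one naively insisted on $A(\lambda)\equiv 1$, then at the (discrete) values of $\lambda$ for which the bracketed factor vanishes, no $\sigma$ could satisfy the relation unless the right-hand sum also vanished, which generically fails. Letting $A(\lambda)$ absorb exactly that bracketed factor is what keeps the construction valid uniformly in $\lambda$ and explains why the lemma admits $A(\lambda)$ and $B(\lambda)$ as $\lambda$-dependent scalars rather than universal constants.
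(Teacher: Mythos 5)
Your proof is correct, and while it starts from the same reduction as the paper, it resolves the key compatibility step by a genuinely different and more explicit route. Both arguments share the first move: since $\sigma=\sum_{k}y(a_k)$ is a scalar, \eqref{E12} is the inhomogeneous equation $y''+\lambda y=\sigma q$, and variation of parameters with kernel $\sin\rho(x-t)/\rho$ produces exactly the shapes \eqref{E13}--\eqref{E16} (the paper asserts this equivalence without computation). The divergence is in enforcing self-consistency. The paper keeps the $n$ values $S_F(a_k,\lambda)$ as separate unknowns and solves the $n\times n$ system $Wu=-A(\lambda)v$ with $W_{ij}=w_q(a_i)-\delta_{ij}$, where $w_q(x)=\int_0^x q(t)\frac{\sin\rho(x-t)}{\rho}\,dt$, choosing $A(\lambda)=1$ when $v$ lies in the column space of $W$ and $A(\lambda)=0$ (with a nontrivial kernel vector) otherwise. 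You instead exploit the fact that $W$ is a rank-one perturbation of $-I$, so only the sum $\sigma$ of the unknowns is actually constrained, collapsing the system to the single scalar relation $\sigma\bigl(1-\sum_m w_q(a_m)\bigr)=A(\lambda)\sum_m\frac{\sin\rho a_m}{\rho}$, which you solve uniformly by $A(\lambda)=1-\sum_m w_q(a_m)$ and $\sigma=\sum_m\frac{\sin\rho a_m}{\rho}$; substituting back, one checks $\sum_m S_F(a_m,\lambda)=\sigma$, so the representation \eqref{E13} holds as stated. Your version buys a case-free, explicit, entire-in-$\lambda$ choice of $A(\lambda)$ and makes transparent why $A$ must be allowed to depend on $\lambda$; the paper's dichotomy buys a normalized $A\in\{1,0\}$ and a \emph{nontrivial} solution in every case. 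One point worth flagging: at exceptional $\lambda$ where both $1-\sum_m w_q(a_m)=0$ and $\sum_m\sin\rho a_m=0$, your uniform formula degenerates to $S_F\equiv 0$. This does not contradict the lemma as stated, which asserts only existence; and if nontriviality is wanted there, observe that in this doubly degenerate case the scalar relation reads $0=0$, so any pair works --- for instance $A(\lambda)=1$, $\sigma=0$, giving $S_F(x,\lambda)=\frac{\sin\rho x}{\rho}$, which is again self-consistent since $\sum_m S_F(a_m,\lambda)=\sum_m\frac{\sin\rho a_m}{\rho}=0=\sigma$.
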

\begin{proof}
Note that all solutions of \eqref{E12} satisfying the initial  condition $y(0,\lambda)=0$ must satisfies \eqref{E13} vice versa. By the continuity of $S_F(x,\lambda)$ at $a_k,$  $\{S_F(a_k,\lambda)\}_{i=1}^n$ shall satisfy the linear system
\begin{align}\label{E17}
-A(\lambda)\dfrac{\sin \rho a_k}{\rho}=\left(\int_0^{a_k}q(t)\dfrac{\sin \rho(a_k-t)}{\rho}\, dt -1\right)S_F(a_k,\lambda)+\sum_{j\ne k}S_F(a_j,\lambda)\int_0^{a_k} q(t)\dfrac{\sin \rho(a_k-t)}{\rho}\, dt.
\end{align}
Denote $w_q(x)=\int_{0}^{x}q(t)\frac{\sin \rho(x-t)}{\rho}\,dt,$ then \eqref{E17} turns to
\begin{equation}
Wu=-A(\lambda)v,	
\end{equation}
where
\begin{align}
	W=\begin{bmatrix}
		w_q(a_{1})-1 & w_q(a_1) &  w_q(a_1) &\hdots &w_q(a_1) \\
		w_q(a_2) &w_q(a_2)-1 & w_q(a_2) & \hdots & w_q(a_2) \\
		w_q(a_3) & w_q(a_3) & w_q(a_3)-1 &  \hdots &w_q(a_3) \\
		\vdots &\vdots & \vdots &\ddots & \vdots \\
		w_q(a_n) & w_q(a_n) & w_q(a_n) &\hdots & w_q(a_n)-1
		\end{bmatrix},
		\end{align}
		\begin{align}
u=\begin{bmatrix}S_F(a_1,\lambda)\\ S_F(a_2,\lambda) \\ S_F(a_3,\lambda) \\ \vdots \\S_F(a_n,\lambda)
\end{bmatrix} \text{ and }v=\begin{bmatrix}
\frac{\sin \rho a_1}{\rho} \\
\frac{\sin \rho a_2}{\rho} \\
\frac{\sin \rho a_3}{\rho} \\
\vdots \\ \frac{\sin \rho a_n}{\rho}
\end{bmatrix}.
\end{align}
We can always take $A(\lambda)=1 \text { or } 0$ which depends on whether $v$ is in the column space of $W.$ The existence of $S_F(x,\lambda)$ is thus ensured. Similarly, the existence of $C_F(x,\lambda)$ is ensured. Moreover, equations \eqref{E15} and \eqref{E16} are immediate consequences of \eqref{E13} and \eqref{E14}, respectively.
\end{proof}

In the remainder of this section, we construct the characteristic function $\triangle_{G_p}(\lambda)$ for the problem on the graph. Because the derivation of the characteristic function requires solving a large linear system, we begin by considering a simplified system to illustrate the derivation process. First, we note that if the ordering of the edges is altered, the eigenvalues remain unchanged, and consequently, the characteristic function is invariant up to a constant multiple. For illustration, we consider the less complicated case $p=4,$ where
$$F_1=\{a_{1,1}\},\, F_{2}=\{a_{1,2}, a_{2,2}\},\, F_3=F_4=\emptyset\text{ and }\alpha_j=0 \text{ for } j=1,2,3,4.$$
We denote by $S_{F_j}(x,\lambda)$ the solution of type \eqref{E13} for $l_j(y_j)=\lambda y_j$,\, $j=1,2$ and $S_j(x,\lambda)$ the sine solution of $l_j(y_j)=\lambda y_j$ for $j=3,4.$  For $j=1,2,$ 
\begin{equation}\label{E21}
S_{F_j}(x,\lambda)=A_j(\lambda)\dfrac{\sin \rho x}{\rho }+\sum_{k=1}^{j}S_{F_j}(a_{k,j},\lambda)\int_0^x q_j(x)\dfrac{\sin \rho(x-t)}{\rho}\, dt,
\end{equation}
and for $j=3,4,$
\begin{equation}\label{E22}
S_j(x,\lambda)=\dfrac{\sin \rho x}{\rho}+\int_0^x q_j(t)\dfrac{\sin \rho(x-t)}{\rho}S_j(t,\lambda)\, dt.  	
\end{equation}
$\lambda$ is an eigenvalue of $L_{G_4}(Q_1,Q_2,Q_3,Q_4)$ if and only if there exist $c_1,c_2, c_3, c_4 \in \mathbb C$ so that
\begin{align}
&c_1S_{F_1}(\pi,\lambda)=c_2S_{F_2}(\pi,\lambda)=c_3S_{3}(\pi,\lambda)=c_4S_{4}(\pi,\lambda),\label{E23}\\
& c_1S'_{F_1}(\pi,\lambda)+c_2S'_{F_2}(\pi,\lambda)+c_3S'_{3}(\pi,\lambda)+c_4S'_{4}(\pi,\lambda)=0,\label{E24}
\end{align}
together with the continuity of solutions at frozen arguments, we have
\begin{align}
c_1S_{F_1}(a_{1,1},\lambda)&=c_1A_1(\lambda)\dfrac{\sin \rho a_{1,1}}{\rho}+c_1S_{F_1}(a_{1,1},\lambda)w_{q_1}(a_{1,1})\label{E25},\\
c_2S_{F_2}(a_{1,2},\lambda)&=c_2A_2(\lambda)\dfrac{\sin \rho a_{1,2}}{\rho}+c_2(S_{F_2}(a_{1,2},\lambda)+S_{F_2}(a_{2,2},\lambda))w_{q_2}(a_{1,2})\label{E26},\\
c_2S_{F_2}(a_{2,2},\lambda)&=c_2A_2(\lambda)\dfrac{\sin \rho a_{2,2}}{\rho}+c_2(S_{F_2}(a_{1,2},\lambda)+S_{F_2}(a_{2,2},\lambda))w_{q_2}(a_{2,2})\label{E27}.
\end{align}
Equations  \eqref{E23}-\eqref{E27} yield  the following linear system : 
\begin{equation*}
\begin{bmatrix}
\dfrac{\sin \rho a_{1,1}}{\rho} & w_{q_1}(a_{1,1})-1 & 0 &0 & 0 &0 &0 \\
0 & 0 & \dfrac{\sin \rho a_{1,2}}{\rho} & w_{q_2}(a_{1,2})-1 & w_{q_2}(a_{1,2}) &0 &0 \\
0 & 0 & \dfrac{\sin \rho a_{2,2}}{\rho} & w_{q_2}(a_{2,2}) & w_{q_2}(a_{2,2})-1 &0 &0 \\
\dfrac{\sin \rho \pi}{\rho} &   w_{q_1}(\pi) & -\dfrac{\sin \rho \pi}{\rho} &-w_{q_2}(\pi)& -w_{q_2}(\pi)& 0 & 0 \\
0 & 0 & \dfrac{\sin \rho \pi}{\rho} &w_{q_2}(\pi)& w_{q_2}(\pi)& -S_3(\pi) & 0\\
0 & 0 & 0 & 0 & 0 & S_{3}(\pi) & -S_{4}(\pi) \\
\cos \rho \pi & w'_{q_1}(\pi) &\cos \rho \pi & w'_{q_2}(\pi) &w'_{q_2}(\pi)& S'_3(\pi) & S'_4(\pi)\\
\end{bmatrix}
	\begin{bmatrix}
	c_{1}A_{1}(\lambda) \\ c_{1} S_{F_1}(a_{1,1}) 	\\ c_{2}A_{2}(\lambda) \\ c_{2} S_{F_2}(a_{1,2})\\ c_{2} S_{F_2}(a_{2,2}) \\ c_{3}  \\c_{4}\\
		\end{bmatrix}=\begin{bmatrix}0 \\ 0\\ 0\\ 0 \\ 0\\ 0\\ 0\end{bmatrix},
\end{equation*}
where $w_q(x,\rho)=\int_0^x q(t)\dfrac{\sin \rho(x-t)}{\rho}\, dt $   and for simplicity, we denote $w_q(x)$ as $w_q(x,\rho).$
Hence the characteristic function $\triangle_{G_4}(\lambda)$ of $L_{G_4}(Q_1,Q_2,Q_3,Q_4)$ is

\begin{equation*}
\triangle_{G_4}(\lambda)=\left|\begin{matrix}
\dfrac{\sin \rho a_{1,1}}{\rho} & w_{q_1}(a_{1,1})-1 & 0 & 0 & 0 & 0 & 0 \\
0 & 0 & \sum_{j=1}^{2}\dfrac{\sin \rho a_{j,2}}{\rho} & \sum_{j=1}^{2}w_{q_2}(a_{j,2})-1 & 0 & 0 & 0 \\
0 & 0 & \dfrac{\sin \rho a_{2,2}}{\rho} & w_{q_2}(a_{2,2}) & -1 & 0 & 0 \\
\dfrac{\sin \rho \pi}{\rho} &  w_{q_1}(\pi) & -\dfrac{\sin \rho \pi}{\rho} & -w_{q_2}(\pi)& 0 & 0 & 0 \\
0 & 0 & \dfrac{\sin \rho \pi}{\rho} & w_{q_2}(\pi)& 0 & -S_3(\pi) & 0\\
0 & 0 & 0 & 0 & 0 & S_{3}(\pi) & -S_{4}(\pi) \\
\cos \rho \pi & w'_{q_1}(\pi) & \cos \rho \pi  & w'_{q_2}(\pi) & 0 & S'_3(\pi) & S'_4(\pi)\\
\end{matrix}\right|.\\
\end{equation*}

To expand the determinant $\triangle_{G_4}(\lambda),$ we denote the characteristic function $\triangle^{(\alpha,\beta)}_q(\lambda)$ for the nonlocal Sturm-Liouville operator $L(q,\alpha,\beta)$ which consists in \eqref{E12} associated with boundary condition
\begin{equation}\label{E28}
y^{(\alpha)}(0)=y^{(\beta)}(\pi)=0,\; \alpha,\; \beta\in \{0,1\}.	
\end{equation}
From \cite{ST}, we have
$$\triangle_{q}^{(0,0)}(\lambda)=\begin{vmatrix}
	\dfrac{\sin \rho a_1}{\rho} & w_{q}(a_{1})-1 & w_{q}(a_{1})  &\hdots & w_{q}(a_{1}) \\
	\dfrac{\sin \rho a_2}{\rho} & w_{q}(a_{2})  & w_{q}(a_{2})-1 &\hdots & w_{q}(a_{2})\\
	\vdots & \vdots & \vdots  &\ddots &\vdots \\
	\dfrac{\sin \rho a_n}{\rho} & w_{q}(a_{n}) & w_{q}(a_{n}) &\hdots & w_{q}(a_{n})-1\\
	\dfrac{\sin \rho \pi}{\rho} & w_{q}(\pi)  & w_{q}(\pi)  &\hdots   & w_{q}(\pi)
	\end{vmatrix}=\begin{vmatrix}
	\sum_{j=1}^{n}\dfrac{\sin \rho a_j}{\rho} & \sum_{j=1}^{n} w_{q}(a_{j})-1  \\
	\dfrac{\sin \rho \pi}{\rho} & w_{q}(\pi) 
	\end{vmatrix},$$
and 
$$\triangle_{q}^{(0,1)}(\lambda)=\begin{vmatrix}
	\dfrac{\sin \rho a_1}{\rho} & w_{q}(a_{1})-1 & w_{q}(a_{1})  &\hdots & w_{q}(a_{1}) \\
	\dfrac{\sin \rho a_2}{\rho} & w_{q}(a_{2})  & w_{q}(a_{2})-1 &\hdots & w_{q}(a_{2})\\
	\vdots & \vdots & \vdots  &\ddots &\vdots \\
	\dfrac{\sin \rho a_n}{\rho} & w_{q}(a_{n}) & w_{q}(a_{n}) &\hdots & w_{q}(a_{n})-1\\
	\cos \rho \pi & w'_{q}(\pi)  & w'_{q}(\pi)  &\hdots   & w'_{q}(\pi)
	\end{vmatrix}=\begin{vmatrix}
	\sum_{j=1}^{n}\dfrac{\sin \rho a_j}{\rho} & \sum_{j=1}^{n} w_{q}(a_{j})-1  \\
	\cos \rho \pi & w'_{q}(\pi) 
	\end{vmatrix}.$$

Now, by expanding the determinant with respective to the fifth column and using the multi-linearity of the determinant, we can obtain    

\begin{align*}
\triangle_{G_4}(\lambda)
&=-\left|\begin{matrix}
0 & \sum_{j=1}^{2}\dfrac{\sin \rho a_{j,2}}{\rho} & \sum_{j=1}^{2}w_{q_2}(a_{j,2})-1 & 0 & 0  \\
\triangle_{q_1}^{(0,0)}(\lambda) & -\dfrac{\sin \rho \pi}{\rho} & -w_{q_2}(\pi) & 0 & 0  \\
0 & \dfrac{\sin \rho \pi}{\rho} &  w_{q_2}(\pi) & -S_3(\pi) & 0 \\
0 & 0 & 0 & S_{3}(\pi) & -S_{4}(\pi) \\
\triangle_{q_1}^{(0,1)}(\lambda) & \cos \rho \pi & w'_{q_2}(\pi) &  S'_3(\pi) & S'_4(\pi)\\
\end{matrix}\right|\\
&=\triangle_{q_1}^{(0,0)}(\lambda)\left|\begin{matrix}
\sum_{j=1}^{2}\dfrac{\sin \rho a_{j,2}}{\rho} & \sum_{j=1}^{2}w_{q_2}(a_{j,2})-1 & 0  & 0 \\
\dfrac{\sin \rho \pi}{\rho}& w_{q_2}(\pi) &-S_3(\pi) &0  \\
0 & 0 & S_3(\pi) &-S_4(\pi)\\
\cos \rho \pi  & w'_{q_2}(\pi) & S'_{3}(\pi) & S'_{4}(\pi)\\
\end{matrix}\right|\\
&-\triangle_{q_1}^{(0,1)}(\lambda)\left|\begin{matrix}
\sum_{j=1}^{2}\dfrac{\sin \rho a_{j,2}}{\rho} & \sum_{j=1}^{2}w_{q_2}(a_{j,2})&0 &0 \\
-\dfrac{\sin \rho \pi}{\rho}& -w_{q_2}(\pi) & 0 &0  \\
\dfrac{\sin \rho \pi}{\rho}& w_{q_2}(\pi) & -S_3(\pi) &0\\
0 & 0 &  S_{3}(\pi) & -S_{4}(\pi)
\end{matrix}\right|.
\end{align*}
Moreover, we have
\begin{equation*}
\triangle_{G_4}(\lambda)=\triangle_{q_1}^{(0,0)}(\lambda) \left|\begin{matrix}
\triangle_{q_2}^{(0,0)}(\lambda)  & -S_{3}(\pi) & 0  \\
0 & S_{3}(\pi) & -S_{4}(\pi)  \\
\triangle_{q_2}^{(0,1)}(\lambda) & S'_3(\pi) &  S'_4(\pi) 
\end{matrix}\right|\\
-\triangle_{q_1}^{(0,1)}(\lambda)\left(-S_{4}(\pi)\right)\left(-S_{3}(\pi)\right)\left(-\triangle_{q_2}^{(0,0)}(\lambda)\right).
\end{equation*}
Denote $S_i(\pi,\lambda)=\triangle^{(0,0)}_{q_i}(\lambda),$ $S'_i(\pi,\lambda)=\triangle^{(0,1)}_{q_i}(\lambda)$ for $i=1,2.$ Then
\begin{equation}\label{E30}
	\triangle_{G_4}(\lambda)=\sum_{k=1}^4\triangle^{(0,1)}_{q_k}(\lambda)\prod_{i=1,i\ne k}^{4}\triangle^{(0,0)}_{q_i}(\lambda).
\end{equation}
This form is consistent with the characteristic function of ordinary Sturm-Liouville operator on star-shaped graph. If the graph is of $p$ edges with
$m$ total frozen arguments, the corresponding matrix is of size $p+m$. This example gives useful insights for characteristic function of the boundary value problem $L_{G_p}(Q_1,Q_2,\cdots,Q_p).$  Before proceeding further, we introduce the following notation.
 \begin{equation*} 
\varphi_{\gamma}(x):=\varphi_{\gamma}(x, \lambda)=	\begin{cases}
		\dfrac{\sin \rho x}{\rho} \text{ if  } \gamma=0, \\
		\cos \rho x \text{ if  } \gamma=1,  \\ 
		\end{cases}  \end{equation*}
where $\lambda=\rho^2.$ Next, for a given edge $e_k$, define an $(n_{k}+1)\times(n_{k}+1)$ matrix $M_k$ as
\begin{equation}
M_k=	\begin{bmatrix}
\varphi_{\alpha_k}(a_{1,k})& w_{q_k}(a_{1,k})-1 & w_{q_k}(a_{1,k}) & \hdots   & w_{q_k}(a_{1,k})  \\
\varphi_{\alpha_k}(a_{2,k})& w_{q_k}(a_{2,k}) & w_{q_k}(a_{2,k})-1 & \hdots   & w_{q_k}(a_{2,k})  \\
\vdots & \vdots & \vdots &   \ddots & \vdots \\
\varphi_{\alpha_k}(a_{n_k,k})& w_{q_k}(a_{n_k,k}) & w_{q_k}(a_{n_k,k}) & \hdots   & w_{q_k}(a_{n_k,k})-1  \\
\varphi_{\alpha_k}(\pi) & w_{q_k}(\pi) & w_{q_k}(\pi) & \hdots & w_{q_k}(\pi) 
\end{bmatrix},
\end{equation} 
a corresponding $(n_{k}+2)\times (n_{k}+1)$ matrix $\widehat{M_k}$ as
$$\widehat{M_k}=\begin{bmatrix}
-\varphi_{\alpha_k}(\pi) & -w_{q_k}(\pi) & -w_{q_k}(\pi) & \hdots & -w_{q_k}(\pi) \\
\varphi_{\alpha_k}(a_{1,k})& w_{q_k}(a_{1,k})-1 & w_{q_k}(a_{1,k}) & \hdots   & w_{q_k}(a_{1,k})  \\
\varphi_{\alpha_1}(a_{2,k})& w_{q_k}(a_{2,k}) & w_{q_k}(a_{2,k})-1 & \hdots   & w_{q_k}(a_{2,k})  \\
\vdots & \vdots & \vdots &   \ddots & \vdots \\
\varphi_{\alpha_k}(a_{n_k,k})& w_{q_k}(a_{n_k,k}) & w_{q_k}(a_{n_k,k}) & \hdots   & w_{q_k}(a_{n_k,k})-1  \\
\varphi_{\alpha_k}(\pi) & w_{q_k}(\pi) & w_{q_k}(\pi) & \hdots & w_{q_k}(\pi) 
\end{bmatrix},
$$	and the operation
\begin{equation*}M_k\oplus \widehat{M_j}=
\begin{bmatrix}
\varphi_{\alpha_k}(a_{1,k})& w_{q_k}(a_{1,k})-1 &  \hdots   & w_{q_k}(a_{1,k}) & 0 &0  &\hdots &0 \\
\varphi_{\alpha_1}(a_{2,k})& w_{q_k}(a_{2,k}) &  \hdots   & w_{q_k}(a_{2,k})  & 0 &0  &\hdots & 0\\
\vdots & \vdots & \vdots &   \vdots & \vdots &\vdots   &\ddots &\vdots  \\
\varphi_{\alpha_k}(a_{n_k,k})& w_{q_k}(a_{n_k,k}) &  \hdots   & w_{q_k}(a_{n_k,k})-1  & 0 &0& \hdots &0   \\
\varphi_{\alpha_k}(\pi) & w_{q_k}(\pi) &   \hdots & w_{q_k}(\pi) &-\varphi_{\alpha_j}(\pi) & -w_{q_j}(\pi) &\hdots  &-w_{q_j}(\pi)\\
0 &0 &\hdots&0 &\varphi_{\alpha_j}(a_{1,j})& w_{q_j}(a_{1,j})-1 & \hdots   & w_{q_j}(a_{1,j}) \\
0 &0 &\hdots&0 &\varphi_{\alpha_j}(a_{2,j})& w_{q_j}(a_{2,j}) & \hdots   & w_{q_j}(a_{2,j})\\
\vdots &\vdots &\ddots &\vdots &\vdots &\vdots &\ddots &\vdots \\
0 & 0 & \hdots &0 &\varphi_{\alpha_j}(a_{n_j,j})& w_{q_j}(a_{n_j,j}) & \hdots   & w_{q_j}(a_{n_j,j})-1\\
0 & 0 & \hdots &0  &\varphi_{\alpha_j}(\pi) & w_{q_j}(\pi) &\hdots  &w_{q_j}(\pi)
\end{bmatrix}.
\end{equation*}
The following lemma holds.
\begin{lemma}\label{L2.3} 
Let \( M_1 \oplus \widehat{M}_2 \oplus \widehat{M}_3 \oplus \dots \oplus \widehat{M}_k \) denote the block matrix constructed as described above. Then,
\begin{equation}\label{eqL1}
\det\left( M_1 \oplus \widehat{M}_2 \oplus \widehat{M}_3 \oplus \cdots \oplus \widehat{M}_k \right) 
= \det\left( \mathrm{Diag}(M_1, M_2, M_3, \dots, M_k) \right) 
= \prod_{i=1}^k \triangle^{(\alpha_i,0)}_{q_i}(\lambda),
\end{equation}
where \( \triangle^{(\alpha_i,0)}_{q_i}(\lambda) \) denotes the characteristic function associated with the Sturm--Liouville problem on edge \( e_i \) under the  boundary conditions $y^{(\alpha_i)}_i(0)=y_i(\pi)=0.$ 
\end{lemma}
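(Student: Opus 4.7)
The plan is to establish the two equalities in \eqref{eqL1} separately, the first being the substantive content of the lemma and the second a routine comparison with the definitions displayed in the preceding pages.

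For the first equality I would proceed by induction on $k$, exploiting the particular sign structure built into $\widehat{M}_j$: its extra top row is exactly $-1$ times the last row of $M_j$. Under the $\oplus$ operation this top row is glued onto the bottom row of the preceding block, producing a ``linking row'' whose left portion holds the last row of $M_{j-1}$ and whose right portion holds the negated last row of $M_j$, with zeros elsewhere. The crucial observation is that the very last row of the full matrix $M_1 \oplus \widehat{M}_2 \oplus \cdots \oplus \widehat{M}_k$ consists of the last row of $M_k$ in the $M_k$-column block and zeros in every other column. Adding this bottom row to the $(k-1)$-st linking row cancels the negated $M_k$ entries and leaves that linking row equal to the last row of $M_{k-1}$ in its own column positions, with zeros to the right. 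After this single determinant-preserving row operation, the matrix has the block-diagonal form
\[
\begin{pmatrix} A & 0 \\ 0 & M_k \end{pmatrix}, \qquad A := M_1 \oplus \widehat{M}_2 \oplus \cdots \oplus \widehat{M}_{k-1},
\]
so its determinant factors as $\det(A)\cdot\det(M_k)$, and the inductive hypothesis then yields $\prod_{i=1}^{k}\det(M_i)=\det(\mathrm{Diag}(M_1,\ldots,M_k))$ as desired.

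For the second equality I would simply compare $M_i$ entry by entry with the matrix appearing in the definition of $\triangle_{q_i}^{(\alpha_i,0)}(\lambda)$: the formula displayed above for $\triangle_{q}^{(0,0)}$ extends verbatim to the general $(\alpha,0)$ case upon replacing $\sin\rho x/\rho$ by $\varphi_{\alpha}(x)$ in the first column, and this is precisely how $M_i$ is constructed. Hence $\det(M_i) = \triangle_{q_i}^{(\alpha_i,0)}(\lambda)$, which combined with the previous paragraph yields \eqref{eqL1}.

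The main obstacle I expect is purely notational bookkeeping: one has to keep the column ranges for each block cleanly separated, verify that the off-diagonal zero sub-blocks sit exactly where claimed after the row operation, and treat the degenerate case $n_i=0$ (an edge without frozen arguments, where $M_i$ collapses to the $1\times 1$ scalar $\varphi_{\alpha_i}(\pi)$ and $\widehat{M}_i$ to a $2\times 1$ column). Once the $k=3$ instance is carried out transparently, the inductive extension is mechanical.
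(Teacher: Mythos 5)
Your proof is correct and takes essentially the same approach as the paper: the paper's entire proof is the statement that the identity ``can be obtained by applying row operations'' with the details omitted, and those details are precisely the block-decoupling row operation (adding the bottom $\pi$-row of $M_k$ to the linking row to kill its negated right portion) and induction that you spell out, together with the column reduction identifying $\det(M_i)$ with $\triangle_{q_i}^{(\alpha_i,0)}(\lambda)$. One minor slip in your closing remark: under the paper's convention, $F_i=\emptyset$ means $l_i$ is the \emph{ordinary} Sturm--Liouville operator with potential $q_i$, so the degenerate $1\times 1$ block is $z_i(\pi,\lambda)=\triangle_{q_i}^{(\alpha_i,0)}(\lambda)$ (cf.\ the blocks $\widehat{N_j}$ in the proof of Theorem \ref{T2.4}) rather than the free solution $\varphi_{\alpha_i}(\pi)$ --- this does not affect your argument.
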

\begin{proof}
This assertion can be obtained by applying row operations to $$M_1\oplus \widehat{M_2}\oplus \widehat{M_3}\oplus\dots \oplus \widehat{M_k},$$
and we omit the details.
\end{proof}

Note that Equality \eqref{eqL1} remains valid even when $ det(M_j)$ degenerates to a standard determinant expression $\triangle^{(\alpha_j,0)}_{q_j}(\lambda)$, that is, when the differential operator on edge $e_j$ reduces to an ordinary Sturm-Liouville operator on the interval $[0,\pi]$.

\begin{theorem}\label{T2.4}
The characteristic function $\triangle_{G_p}(\lambda)$ of the boundary value problem $L_{G_p}(Q_1,Q_2,\cdots,Q_p)$ is as following:
 \begin{align}
	\triangle_{G_p}(\lambda)&=\sum_{k=1}^p\triangle_{q_k}^{(\alpha_k,1)}(\lambda)\prod_{j=1,j\ne k}^p\triangle_{q_j}^{(\alpha_j,0)}(\lambda),\; \alpha_i\in \{0,1\} \text{ for } 1\leq i\leq p, \label{E30-1}\\
	&=\Delta_{q_p}^{(\alpha_p,0)}(\lambda)\Delta_{G_{p-1}}(\lambda)+\Delta_{q_p}^{(\alpha_p,1)}(\lambda)\prod_{k=1}^{p-1}\Delta_{q_k}^{(\alpha_k,0)}(\lambda).\notag
\end{align}
Here, the boundary condition on edge $e_j$ is $y_j^{(\alpha_j)}(0)=0,$ and $\Delta_{G_{p-1}}(\lambda)$ denote the characteristic function of the subgraph problem $L_{G_{p-1}}(Q_1,Q_2,\dots,Q_{p-1})$ which preserves the boundary conditions. The differential operator on edge $e_p$ can be either an ordinary or a nonlocal Sturm-Liouville operator. 
\end{theorem}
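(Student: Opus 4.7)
The plan is to argue by induction on $p$, with the recursive identity
$\triangle_{G_p}(\lambda)=\Delta_{q_p}^{(\alpha_p,0)}(\lambda)\,\Delta_{G_{p-1}}(\lambda)+\Delta_{q_p}^{(\alpha_p,1)}(\lambda)\prod_{k=1}^{p-1}\Delta_{q_k}^{(\alpha_k,0)}(\lambda)$
as the primary target; once that is in hand, a straightforward induction (with base case $p=2$ handled by Lemma \ref{L2.3} and the $p=4$ computation already in the text as a sanity check) yields the closed-form sum \eqref{E30-1}. The starting point is to write out the eigenvalue condition on $G_p$ in full generality, exactly as in the $p=4$ example: for each edge $e_j$ use the solution $S_{F_j}$ (respectively $S_j$) with unknown multiplier $c_j A_j(\lambda)$ (respectively $c_j$), impose the $n_j$ continuity relations at the frozen arguments $a_{k,j}$, impose the $p-1$ matching conditions $c_j\,y_j(\pi)=c_{j+1}\,y_{j+1}(\pi)$ coming from \eqref{E6}, and finally impose the single Kirchhoff flux condition $\sum_j c_j\,y'_j(\pi)=0$. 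This gives a homogeneous linear system whose coefficient matrix is square of size $p+\sum_j n_j$; its determinant, up to a nonzero normalization, is $\triangle_{G_p}(\lambda)$.

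For the inductive step I would focus on the columns associated with edge $e_p$, i.e. the block of $n_p+1$ columns corresponding to $c_p A_p(\lambda)$ and $c_p S_{F_p}(a_{k,p},\lambda)$. These columns interact with the rest of the matrix only through two rows: the row enforcing $c_{p-1}y_{p-1}(\pi)=c_p y_p(\pi)$ and the row carrying the flux sum. Collect the entries in these two rows contributed by $e_p$: they are precisely $\varphi_{\alpha_p}(\pi),w_{q_p}(\pi),\ldots,w_{q_p}(\pi)$ in the continuity row and $\varphi'_{\alpha_p}(\pi),w'_{q_p}(\pi),\ldots,w'_{q_p}(\pi)$ in the flux row. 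Using multilinearity of the determinant in the flux row, split it into the part supported on the $e_p$-columns and the part supported on the $e_1,\ldots,e_{p-1}$-columns. In the first piece, the $e_p$-block combined with the continuity and flux rows is exactly the matrix $M_p$, contributing $\triangle_{q_p}^{(\alpha_p,1)}(\lambda)$, while the remaining $G_{p-1}$-part decouples and, by Lemma \ref{L2.3}, equals $\prod_{k=1}^{p-1}\triangle_{q_k}^{(\alpha_k,0)}(\lambda)$. In the second piece, the $e_p$-block reduces to the $M_p$-determinant with $\varphi'_{\alpha_p}$ replaced by $\varphi_{\alpha_p}$ (giving $\triangle_{q_p}^{(\alpha_p,0)}(\lambda)$), while the remaining block is, after recognizing the $\widehat{M_j}$ pattern at the interface row, precisely the matrix describing the $L_{G_{p-1}}$ problem, hence contributes $\triangle_{G_{p-1}}(\lambda)$. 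Adding the two pieces yields the recursive identity.

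The principal obstacle is bookkeeping: with $n_p+1$ extra columns and two shared rows, the multilinear expansion must be carried out carefully so that each resulting block is genuinely of the $M_k$ or $\widehat{M_k}$ shape defined in Section 2. Lemma \ref{L2.3} is the tool that hides the combinatorial pain, because it lets me certify without further calculation that the direct-sum construction in the second piece collapses to $\prod \triangle_{q_k}^{(\alpha_k,0)}(\lambda)$, and that in the first piece the $\widehat{M_j}$-type augmentations across edges $e_1,\dots,e_{p-1}$ assemble into exactly the characteristic matrix of $L_{G_{p-1}}$. The only subtlety is making sure that the operator type on $e_p$ (ordinary vs.\ nonlocal) does not affect the argument, which is precisely the content of the remark following Lemma \ref{L2.3}: when $F_p=\emptyset$ the block $M_p$ degenerates to a $1\times 1$ entry equal to $\triangle_{q_p}^{(\alpha_p,0)}(\lambda)$ or $\triangle_{q_p}^{(\alpha_p,1)}(\lambda)$, so the same split applies verbatim. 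Once the inductive step is established, iterating it over $k=p,p-1,\ldots,2$ collapses the recursion to the symmetric sum \eqref{E30-1}.
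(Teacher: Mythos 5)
Your proposal is correct, and while it shares the paper's overall skeleton (induction on $p$ through the recursion $\triangle_{G_p}=\triangle_{q_p}^{(\alpha_p,0)}\triangle_{G_{p-1}}+\triangle_{q_p}^{(\alpha_p,1)}\prod_{k<p}\triangle_{q_k}^{(\alpha_k,0)}$, base case $p=2$, Lemma \ref{L2.3} as the workhorse), your inductive step is genuinely different. The paper first orders the edges so that an \emph{ordinary} edge sits last: its block is then a single column $\widehat{N_p}=(-z_p(\pi),\,z_p'(\pi))^{T}$, and expanding along that one column plus Lemma \ref{L2.3} gives the recursion with no multi-column bookkeeping; the all-nonlocal case is then handled not by any expansion but by a comparison trick --- adjoin an auxiliary ordinary edge $e_{p+1}$, compute $\triangle_{G_{p+1}}$ both via the recursion and via the already-established sum formula, and cancel the nonvanishing factor $z_{p+1}=\triangle_{q_{p+1}}^{(\alpha_{p+1},0)}$ to deduce \eqref{E30-1} for $G_p$. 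You instead attack the (possibly multi-column) nonlocal block of $e_p$ head-on, splitting the flux row by multilinearity and Laplace-expanding along the $n_p+1$ columns of $e_p$; this treats ordinary and nonlocal last edges uniformly (via the remark after Lemma \ref{L2.3}) and dispenses with the auxiliary edge entirely, at the cost of the sign and block bookkeeping you acknowledge. One point in your sketch deserves to be made explicit: in your first piece the $e_p$-columns meet $n_p+2$ rows (the $n_p$ continuity rows, the matching row, and the flux row), so the Laplace expansion a priori produces two minors; the minor that keeps the matching row and omits the flux row pairs with a complementary determinant whose flux row has been zeroed out by your split, hence vanishes, and only the continuity-plus-flux minor survives. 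This vanishing --- not literal block-diagonality, since the matching row still couples $e_{p-1}$ to $e_p$ --- is what makes the $G_{p-1}$-part ``decouple'' to $\prod_{k<p}\triangle_{q_k}^{(\alpha_k,0)}$, with the matching row restricted to the first $p-1$ edges serving as the terminal row required by Lemma \ref{L2.3}; the signs can then be fixed once against the $p=2$ and $p=4$ computations already in the text.
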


\begin{proof}
The case in which all differential operators on all edges are ordinary has been extensively studied. In this work, we extend this framework by considering cases in which certain edges are characterized by nonlocal Sturm-Liouville operators. For convenience in determining the characteristic function $\triangle_{G_p}(\lambda)$ for $L_{G_p}(Q_1,Q_2,\cdots,Q_p),$ we introduce the following notations:
\begin{equation*}
z_{j}(x, \lambda)=	\begin{cases}
		S_{j}(x,\lambda) \text{ if  } \alpha_{j}=0, \\
		C_{j}(x,\lambda) \text{ if  } \alpha_{j}=1,  \\ 
		\end{cases} \, \,\text{ and }\,\, z_{F_j}(x, \lambda)=	\begin{cases}
		S_{F_j}(x,\lambda) \text{ if  } \alpha_{j}=0, \\
		C_{F_j}(x,\lambda) \text{ if  } \alpha_{j}=1.  \\ 
		\end{cases} 
\end{equation*}
Here, $S_{j}(x,\lambda)$ and $C_{j}(x,\lambda) $ represent the fundamental solutions of equation \eqref{E1}, while  $S_{F_j}(x,\lambda)$ and $C_{F_j}(x,\lambda) $ denote the solutions of equation \eqref{E2}, expressed in the forms given by equations \eqref{E13} and \eqref{E14}, respectively.

We also note that the characteristic function $\triangle^{(\alpha,\beta)}_q(\lambda)$ for the nonlocal Sturm-Liouville operator in (12) associated with boundary condition (28) has the simple representation
\begin{equation}
\triangle_{q}^{(\alpha,\beta)}(\lambda)=\begin{vmatrix}
	\sum _{j=1}^{n}\varphi_{\alpha}(a_{j}) & \sum _{j=1}^{n}w_{q}(a_{j})-1 \\
	\varphi_{\alpha}^{(\beta)}(\pi) & w_{q}^{(\beta)}(\pi) \\
	\end{vmatrix}.
	\end{equation}

We shall prove the assertion by induction on $p$. For $p=2$, there are two cases. The first case consists of one ordinary Sturm-Liouville operator and another operator with frozen arguments, i.e., 
\begin{equation*}
\begin{cases}
-y_{1}''(x)+q_{1}(x)\sum_{k=1}^{n_1}y_{1}(a_{k,1})=\lambda y_{1}(x); \\
-y_{2}''(x)+q_{2}(x)y_{2}(x)=\lambda y_{2}(x); \\
y_{1}^{(\alpha_{1})}(0)=y_{2}^{(\alpha_{2})}(0)=0;\\
y_{1}(\pi)=y_{2}(\pi);\\
y_{1}'(\pi)+y_{2}'(\pi)=0.
\end{cases} 
\end{equation*}
By the continuity  at $a_{k,1}$ and the match conditions, we obtain
\begin{equation*}
\begin{bmatrix}
\varphi_{\alpha_1}(a_{1,1})& w_{q_1}(a_{1,1})-1 & w_{q_1}(a_{1,1}) & \hdots   & w_{q_1}(a_{1,1}) & 0  \\
\varphi_{\alpha_1}(a_{2,1})& w_{q_1}(a_{2,1}) & w_{q_1}(a_{2,1})-1 & \hdots   & w_{q_1}(a_{2,1}) & 0  \\
\vdots & \vdots & \vdots &   \ddots & \vdots &\vdots\\
\varphi_{\alpha_1}(a_{n_1,1})& w_{q_1}(a_{n_1,1}) & w_{q_1}(a_{n_1,1}) & \hdots   & w_{q_1}(a_{n_1,1})-1 & 0  \\
\varphi_{\alpha_1}(\pi) & w_{q_1}(\pi) & w_{q_1}(\pi) & \hdots & w_{q_1}(\pi) & -z_2(\pi)\\
\varphi'_{\alpha_1}(\pi) & w'_{q_1}(\pi) & w'_{q_1}(\pi) & \hdots & w'_{q_1}(\pi) & z'_2(\pi) \\
\end{bmatrix}
	\begin{bmatrix}
	c_{1}D_{1}(\lambda) \\ c_{1}z_{F_1}(a_{1,1},\lambda) \\ c_{1}z_{F_1}(a_{2,1},\lambda) \\ \vdots \\c_{1}z_{F_1}(a_{n_1,1},\lambda)\\ c_2\\
		\end{bmatrix}=\begin{bmatrix}0 \\ 0 \\ 0  \\ \vdots \\0 \\0 \end{bmatrix}.
\end{equation*}
where 
\begin{equation*} 
D_j(\lambda)=	\begin{cases}
		A_j(\lambda) \text{ if  } \alpha_{j}=0, \\
		B_j (\lambda)\text{ if  } \alpha_{j}=1. 
		\end{cases} 
\end{equation*}
Thus, we have the characteristic function 
\begin{equation}\label{E32-1}
\triangle_{G_2}(\lambda)=\begin{vmatrix}
\varphi_{\alpha_1}(a_{1,1})& w_{q_1}(a_{1,1})-1 & 1 & \hdots   & 1 & 0  \\
\varphi_{\alpha_1}(a_{2,1})& w_{q_1}(a_{2,1}) & -1 & \hdots   & 0 & 0  \\
\vdots & \vdots & \vdots &   \ddots & \vdots &\vdots\\
\varphi_{\alpha_1}(a_{n_1,1})& w_{q_1}(a_{n_1,1}) & 0 & \hdots   & -1 & 0  \\
\varphi_{\alpha_1}(\pi) & w_{q_1}(\pi) & 0 & \hdots & 0 & -z_2(\pi)\\
\varphi'_{\alpha_1}(\pi) & w'_{q_1}(\pi) & 0 & \hdots & 0 & z'_2(\pi) \\
\end{vmatrix}=z'_2(\pi)\triangle_{q_1}^{(\alpha_{1},0)}(\lambda)+z_2(\pi)\triangle_{q_1}^{(\alpha_{1},1)}(\lambda).\end{equation}
Moreover, we have
$$
\triangle_{G_2}(\lambda)=\triangle_{q_2}^{(\alpha_{2},1)}(\lambda)\triangle_{q_1}^{(\alpha_{1},0)}(\lambda)+\triangle_{q_2}^{(\alpha_{2},0)}(\lambda)\triangle_{q_1}^{(\alpha_{1},1)}(\lambda).
$$
The second case consists of two Sturm-Liouville operators with frozen arguments, i.e., 
\begin{equation*}
\begin{cases}
-y_{j}''(x)+q_{j}(x)\sum_{k=1}^{n_j}y_{j}(a_{k,j})=\lambda y_{j}(x),\,\,j=1,2; \\
y_{1}^{(\alpha_{1})}(0)=y_{2}^{(\alpha_{2})}(0)=0;\\
y_{1}(\pi)=y_{2}(\pi);\\
y_{1}'(\pi)+y_{2}'(\pi)=0.
\end{cases} 
\end{equation*}
This leas to 
\begin{equation}\triangle_{G_2}(\lambda)=\begin{vmatrix}
\sum_{k=1}^{n_1}\varphi_{\alpha_1}(a_{k,1})& \sum_{k=1}^{n_1}w_{q_1}(a_{k,1})-1 & 0  & 0 &\hdots &0  \\
\varphi_{\alpha_1}(\pi) & w_{q_1}(\pi) &  -\varphi_{\alpha_2}(\pi) & -w_{q_2}(\pi) &  \hdots & -w_{q_2}(\pi)\\
0 &0 & \varphi_{\alpha_2}(a_{1,2})& w_{q_2}(a_{1,2})-1 &  \hdots & w_{q_2}(a_{1,2})\\
0 &0  &\varphi_{\alpha_2}(a_{2,2})& w_{q_2}(a_{2,2}) &  \hdots & w_{q_2}(a_{2,2})\\
\vdots & \vdots  & \vdots &\vdots &  \ddots & \vdots\\
0 &0 & \varphi_{\alpha_2}(a_{n_2,2})& w_{q_2}(a_{n_2,2}) & \hdots & w_{q_2}(a_{n_2,2})-1\\
\varphi'_{\alpha_1}(\pi) & w'_{q_1}(\pi) & \varphi'_{\alpha_2}(\pi) & w'_{q_2}(\pi) &  \hdots & w'_{q_2}(\pi)\\
\end{vmatrix}.
\end{equation}
Thus, we have

\begin{align*}
\triangle_{G_2}(\lambda)&=\begin{vmatrix}
\triangle_{q_1}^{(\alpha_{1},0)}(\lambda) &  -\varphi_{\alpha_2}(\pi) & -w_{q_2}(\pi) & 0 & \hdots & 0\\
0 & \varphi_{\alpha_2}(a_{1,2})& w_{q_2}(a_{1,2})-1 & 1 & \hdots & 1\\
0 & \varphi_{\alpha_2}(a_{2,2})& w_{q_2}(a_{2,2}) & -1 & \hdots & 0\\
\vdots & \vdots  & \vdots &\vdots &   \ddots & \vdots\\
0 & \varphi_{\alpha_2}(a_{n_2,2})& w_{q_2}(a_{n_2,2}) & 0 & \hdots & -1\\
\triangle_{q_1}^{(\alpha_{1},1)}(\lambda) & \varphi'_{\alpha_2}(\pi) & w'_{q_2}(\pi) & 0 & \hdots & 0\\
\end{vmatrix}\\
&=\triangle_{q_1}^{(\alpha_{1},0)}(\lambda)\triangle_{q_2}^{(\alpha_{2},1)}(\lambda)+\triangle_{q_1}^{(\alpha_{1},1)}(\lambda)\triangle_{q_2}^{(\alpha_{2},0)}(\lambda).
\end{align*}

Next, we assume the representations of the characteristic function for the problem with $p-1$ edges are valid. Now, we consider the problem with $p$ edges. 
\begin{equation*}
\begin{cases}
-y_{j}''(x)+q_{j}(x)\sum_{k=1}^{n_j}y_{j}(a_{k,j})=\lambda y_{j}(x),\,j=1,2,...,m;\; 1\leq m\leq p, \\
-y_{j}''(x)+q_{j}(x)y_{j}(x)=\lambda y_{j}(x),\,j=m+1,,...,p;  \\
y_{1}^{(\alpha_{1})}(0)=y_{2}^{(\alpha_{2})}(0)=\cdots=y_{p}^{(\alpha_{p})}(0)=0;\\
y_{1}(\pi)=y_{2}(\pi)=\cdots=y_{p}(\pi);\\
y_{1}'(\pi)+y_{2}'(\pi)+\cdots+y_{p}'(\pi)=0.
\end{cases} 
\end{equation*}

First, we consider the case when $m<p.$ In this setting, the corresponding characteristic function for the problem with 
$p$ edges is given by
\begin{equation}
\begin{vmatrix}&M_1\,\oplus \,\widehat{M_2}\oplus \dots \oplus \widehat{M_m}\oplus \widehat{N_{m+1}}\oplus\dots \oplus \widehat{N_{p}} \\ 
 & \varphi_{\alpha_1}'(\pi)\;  w_{q_1}'(\pi)  \hdots \varphi_{\alpha_m}'(\pi) \;w_{q_m}'(\pi) \; z_{m+1}'(\pi) \hdots  z_{p}'(\pi)
\end{vmatrix},
\end{equation}
where
$$
\widehat{N_{j}}=\begin{bmatrix}
	-z_j(\pi) \\ z_{j}(\pi)
		\end{bmatrix}\; \text{ for }j=m+1,...,p-1 \; \text{ and }\widehat{N_{p}}=\begin{bmatrix}
	-z_p(\pi)
		\end{bmatrix}.
$$
Expanding the determinant with respect to the last column and then applying Lemma 2.3, we obtain
$$\triangle_{G_{p}}(\lambda)=z_p(\lambda)\triangle_{G_{p-1}}(\lambda)+z_p'(\lambda)\prod_{i=1}^{p-1}\triangle^{(\alpha_i,0)}_{q_i}(\lambda).\\
$$
By mathematical induction, \eqref{E30-1} holds for this case and when $p\ge 2.$
Next, consider the situation in which all differential operators are nonlocal. In this case, we add one edge $e_{p+1}$ equipped with an ordinary Sturm-Liouville operator to the graph and denote the new graph  by  $G_{p+1}.$  The previous assertion then yields  
  \begin{align}
  	\triangle_{G_{p+1}}(\lambda)&=z_{p+1}(\lambda)\triangle_{G_{p}}(\lambda)+z_{p+1}'(\lambda)\prod_{i=1}^{p}\triangle^{(\alpha_i,0)}_{q_i}(\lambda),\label{EN1} \\
  	&=\sum_{i=1}^{p+1}\triangle^{(\alpha_i,1)}_{q_i}\prod_{k\ne i}\triangle^{(\alpha_k,1)}_{q_k}. \label{EN2} 
  	\end{align}
  	Noting  that $ z_{p+1}(\lambda)=\triangle_{q_{p+1}}^{(\alpha_{p+1},0)}(\lambda)$ and $ z'_{p+1}(\lambda)=\triangle_{q_{p+1}}^{(\alpha_{p+1},1)}(\lambda).$ Compare \eqref{EN1} and \eqref{EN2},
  	we conclude \eqref{E30-1} holds in  this case as well. This completes the proof.
  	\end{proof}

\section{Main Results}
In the final section, we investigate the following inverse problem and provide a constructive algorithm for the inverse problem.
\vskip 0.2cm
\noindent{\bf Inverse Problem 1.} Given the spectrum $\Lambda $ of $L_{G_{p}}(Q_1, Q_2, Q_3,\dots, Q_p) $, $Q_1,$ $Q_2,$ $\dots,$ $Q_{p-1}$ and $F_p,$ reconstruct $q_p(x).$
\vskip 0.1cm
For simplicity, we only treat the case $\alpha_i=0$ for all $i.$ To conduct a thorough investigation of Inverse Problem 1, it is essential to acquire a deeper understanding of the term $\triangle_{q_p}^{(0, \beta)}$. As referenced in \cite{ST}.
\begin{align}
\triangle^{(0,0)}_{q_p}(\lambda)&=\dfrac{\sin \rho \pi}{\rho}-\dfrac{\sin\rho \pi }{\rho^2} \left(\sum_{k=1}^{n_p}\int_0^{a_{k,p}}q_p(t)\sin\rho(a_{k,p}-t)\, dt  \right)+\sum_{k=1}^{n_p}\dfrac{\sin\rho a_{k,p}}{\rho^2}\int_0^\pi q_p(t)\sin\rho(\pi-t)\, dt, \\
	\triangle^{(0,1)}_{q_p}(\lambda)&=\cos\rho\pi -\dfrac{\cos \rho \pi}{\rho} \left(\sum_{k=1}^{n_p}\int_0^{a_{k,p}}q_p(t) \sin\rho(a_{k,p}-t)\, dt \right)+\sum_{k=1}^{n_p}\dfrac{\sin \rho a_{k,p}}{\rho}\int_0^{\pi}q_p(t)\cos\rho(\pi-t)\, dt.
\end{align}
Note that for $0< a_{k,p}< \pi,$ we have
\begin{align}\label{E34}
	&-\sin \rho\pi \int_0^{a_{k,p}}q_p(t)\sin \rho (a_{k,p}-t)\, dt +\sin\rho a_{k,p} \int_0^\pi q_p(t)\sin \rho(\pi-t)\, dt \\
	=&\dfrac{1}{2}(\int_0^{a_{k,p}}q_p(t)[\cos \rho(\pi+a_{k,p}-t)- \cos \rho(\pi-a_{k,p}+t)]\, dt \notag \\
	&+\int_0^\pi q_p(t)[\cos \rho(\pi -a_{k,p}-t)-\cos \rho (\pi +a_{k,p}-t)]\, dt )\notag\\
	=&\dfrac{1}{2}(  -\int_0^{a_{k,p}}q_p(t)\cos \rho(\pi-a_{k,p}+t)\, dt-\int_{a_{k,p}}^\pi q_p(t)\cos \rho (\pi+a_{k,p}-t)\, dt+\int_0^\pi q_p(t)\cos \rho(\pi-a_{k,p}-t)\, dt)\notag
\end{align}
By substitution of variable, \eqref{E34} turns to
\begin{align*}
&\dfrac{1}{2}(-\int_{\pi-a_{k,p}}^\pi q_p(s-\pi+a_{k,p})\cos\rho s\, ds -\int_{a_{k,p}}^\pi q_p(\pi+a_{k,p}-s)\cos\rho s\, ds  \\ +&\int_{-a_{k,p}}^0 q_p(\pi-a_{k,p}-s)\cos \rho s\, ds +\int_0^{\pi-a_{k,p}} q_p(\pi-a_{k,p}-s)\cos \rho s\, ds ) \\
&=\dfrac{1}{2}(-\int_{\pi-a_{k,p}}^\pi q_p(s-\pi+a_{k,p})\cos\rho s\, ds -\int_{a_{k,p}}^\pi q_p(\pi+a_{k,p}-s)\cos\rho s\, ds   \\
+&\int_0^{a_{k,p}} q_p(\pi-a_{k,p}+s)\cos \rho s\, ds +\int_0^{\pi-a_{k,p}} q_p(\pi-a_{k,p}-s)\cos \rho s\,  ds )\\
&:=\int_0^\pi N_{k,p}(t)\cos \rho t \, dt.
\end{align*}
Hence, we can write
\begin{equation}\label{E37}
	\triangle^{(0,0)}_{q_p}(\lambda)=\dfrac{\sin \rho \pi}{\rho}+\int_0^\pi N_p(t)\dfrac{\cos \rho t}{\rho^2}\, dt,
\end{equation}
where $N_p(t)=\sum_{k=1}^{n_p}N_{k,p}(t)$ and
\begin{equation}\label{E36-1}
\int_0^\pi N_p(t)\, dt =0.	
\end{equation}
 Similarly,
\begin{equation}\label{E38}
\triangle^{(0,1)}_{q_p}(\lambda)=\cos\rho\pi +\int_0^\pi W_p(t)\dfrac{\sin \rho t}{\rho}\, dt.
\end{equation}
To investigate the asymptotic behavior of eigenvalue, we need to study the zeros of the characteristic function. Without lose of generality, we may assume the Sturm-Liouville  operator on $e_i$ is ordinary for $i\leq l$ and nonlocal for $l+1\leq i\leq p.$  When 
$l=0,$  each Sturm–Liouville operator on the corresponding edge is  nonlocal.  According to \cite{Bon2018}, \eqref{E37} and \eqref{E38}, we know that
\begin{equation}\label{E45-1}
\begin{cases}
	\triangle_{q_i}^{(0,0)} (\lambda)&=\dfrac{\sin \rho\pi }{\rho}-\dfrac{\omega_i\cos\rho \pi}{\rho^2}+\dfrac{1}{\rho^2}\int_0^\pi K_i(t)\cos\rho t\,dt,\\
	 \triangle_{q_i}^{(0,1)} (\lambda)&=\cos\rho\pi +\dfrac{\omega_i \sin \rho\pi }{\rho}+\dfrac{1}{\rho}\int_0^\pi H_i(t)\sin \rho t\, dt,
	\end{cases}
	\end{equation}
	for $1\leq i\leq l$  and 
\begin{equation}\label{E45-2}
\begin{cases}
	\triangle_{q_i}^{(0,0)} (\lambda)&=\dfrac{\sin \rho\pi }{\rho}+\dfrac{1}{\rho^2}\int_0^\pi N_i(t)\cos\rho t\,dt,\\
	 \triangle_{q_i}^{(0,1)} (\lambda)&=\cos\rho\pi +\dfrac{1}{\rho}\int_0^\pi W_i(t)\sin \rho t\, dt,
	\end{cases}
	\end{equation}
	otherwise, where $\omega_i=\frac{1}{2}\int_0^\pi q_i(x)\, dx.$ This leads to 
	\begin{align}
 \triangle_{G_p}(\lambda)=&\sum_{1\leq k\leq l}[(\cos(\rho\pi)+\frac{w_k\sin(\rho\pi)}{\rho}+o(1/\rho)) \prod_{1\leq j\leq l, j\ne k}(\dfrac{\sin\rho\pi}{\rho}-\dfrac{w_j\cos(\rho\pi)}{\rho^2}
 +o(1/\rho^2))] (\frac{\sin(\rho\pi)}{\rho}+o(1/\rho^2))^{p-l}\notag \\
 &+(p-l) [\prod_{1\leq j\leq l}(\dfrac{\sin(\rho\pi)}{\rho}-\dfrac{w_j\cos(\rho\pi)}{\rho^2}+o(1/\rho^2))] (\cos(\rho \pi)+o(1/\rho)) (\frac{\sin(\rho\pi)}{\rho}+o(1/\rho^2))^{p-l-1}.\notag 
 \end{align}
  After direct calculation, we have 
 \begin{align}
  \triangle_{G_p}(\lambda)=\dfrac{p\cos(\rho\pi) \sin^{p-1}(\rho \pi)}{\rho^{p-1}}+\dfrac{\sin^{p-2}(\rho\pi)}{\rho^p}[(1-p\cos^2(\rho\pi))\sum_{1\leq k\leq l}\omega_j] +O(1/\rho^{p +1})
 \end{align}
and the spectral sets consists of sequences $\{\lambda_{k,j}\}_{k=1}^\infty$, $j=0,1,2,3,\dots,p-1,$ from the zeros of $\triangle_{G_p}(\lambda)$ in this way:
\begin{align}
\rho_{k,0}&=\sqrt{\lambda_{k,0}}=k-1/2+\dfrac{A_l}{k}+o(1/k),\label{E48-1}\\
\rho_{k,j}&=\sqrt{\lambda_{k,j}}=k+O(1/k),\; j=1,2,3,\dots,p-1,\label{E49-1}
\end{align}
for all $k\in \mathbb N,$ where $A_l=\sum_{1\leq j\leq l} \omega_j.$ Note $A_l=0$ if $l=0.$
 One shall  also notice that there are at most  finite number of eigenvalues are complex. Now, suppose we have two real sequences $\{\mu_{k,j}^2\}_{k=1}^\infty,$ $ j=0,1,$ where 
 \begin{align*}
 \mu_{k,0}&=k+O(1/k), \\
 \mu_{k,1}&=k-\dfrac{1}{2}+\dfrac{A_l}{k}+o(\dfrac{1}{k}).	
 \end{align*}
We know that 
$$\{1\}\cup \{\cos \mu_{k,0} x\}_{k=1}^\infty $$	 and $$\{\sin \mu_{k,j}x\}_{k=1}^\infty$$
form  Riesz bases in $L^2(0,\pi)$  since they are quadratically close to $\{\cos kx\}_{k=0}^\infty $ and $\{\sin(k-1/2)x\}_{k=1}^\infty $ separately.  In the remaining, we shall present a uniqueness theorem for the case $\alpha_j=0,\;  j=1,2,\dots,p. $
\begin{theorem} Assume that $\alpha_j = 0$ for $j = 1, 2, \dots, p$. Let $Q_1, Q_2, \dots, Q_{p-1}$ and $F_p = \{a_{k,p}\}_{k=1}^{n_p}\ne \emptyset$ be given, along with two sequences of non-zero eigenvalues $\{\mu_{k,j}^2\}_{k=1}^\infty,$ $j=0,1,$ where
\begin{align*}
	\mu_{k,0}&=k+O(1/k),\\
	 \mu_{k,1}&=k-1/2+O(1/k),
\end{align*}
where suppose the following conditions hold:
\begin{enumerate}
\item[(i)] Either $\Delta_{q_l}^{(0,0)}(\lambda)\ne 0$  for $l=1,2,\dots, p-1$ and $\triangle_{q_p}^{(0,1)}(\lambda)\ne 0$  for $\lambda=\mu_{k,j}^2$ or  $\triangle_{G_{p-1}}(\lambda)\ne 0 $ and $\triangle_{q_p}^{(0,0)}(\lambda)\ne 0$  for $\lambda=\mu_{k,j}^2$ for $k\in \mathbb N,$ $j=0,1.$
 \item[(ii)] Both the set $\{1\} \cup \{\cos(\mu_{k,0} x)\}_{k \in \mathbb{N}}$ and the set $\{\sin(\mu_{k,1} x)\}_{k \in \mathbb{N}}$ form Riesz bases in $L^2(0, \pi).$
  \item[(iii)] The equation $\sum_{k=1}^{n_p} \sin(\rho a_{k,p}) = 0$ has no solution in $\mathbb{N}.$
 \end{enumerate}
Then the potential function $q_p(x)$ can be uniquely determined.
\end{theorem}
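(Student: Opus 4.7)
Suppose two potentials $q_p,\tilde q_p\in L^2(0,\pi)$ both satisfy the hypotheses. Write $\hat N=N_p-\tilde N_p$ and $\hat W=W_p-\tilde W_p$ for the differences of the auxiliary functions from \eqref{E37} and \eqref{E38}. By \eqref{E45-2} the leading $\sin(\rho\pi)/\rho$ and $\cos(\rho\pi)$ terms cancel in the differences, so
\[
\triangle^{(0,0)}_{q_p}-\widetilde{\triangle}^{(0,0)}_{q_p}=\frac{1}{\rho^2}\int_0^\pi\hat N(t)\cos(\rho t)\,dt,\quad \triangle^{(0,1)}_{q_p}-\widetilde{\triangle}^{(0,1)}_{q_p}=\frac{1}{\rho}\int_0^\pi\hat W(t)\sin(\rho t)\,dt.
\]
Applying Theorem~\ref{T2.4} to each potential, subtracting, and using that $\mu_{k,j}^2$ is a zero of both $\triangle_{G_p}$ and $\widetilde{\triangle}_{G_p}$ gives
\[
G(\mu_{k,j}^2)\int_0^\pi\hat N(t)\cos(\mu_{k,j}t)\,dt + \mu_{k,j}H(\mu_{k,j}^2)\int_0^\pi\hat W(t)\sin(\mu_{k,j}t)\,dt = 0
\]
for every $k\ge 1$ and $j\in\{0,1\}$, where $G:=\triangle_{G_{p-1}}$ and $H:=\prod_{l=1}^{p-1}\triangle^{(0,0)}_{q_l}$. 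In either alternative of (i), the identity $\triangle_{G_p}=\triangle^{(0,0)}_{q_p}G+\triangle^{(0,1)}_{q_p}H=0$ at $\mu_{k,j}^2$ forces those of $G$, $H$, $\triangle^{(0,0)}_{q_p}$, $\triangle^{(0,1)}_{q_p}$ not directly hypothesized nonzero to be nonzero there as well; in particular $G(\mu_{k,j}^2)\,\mu_{k,j}H(\mu_{k,j}^2)\neq 0$ for all $k,j$.

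Regard $(\hat N,\hat W)\in L^2(0,\pi)\times L^2(0,\pi)$ and introduce the vector functions
\[
\Phi_k^{(j)}(t):=\bigl(G(\mu_{k,j}^2)\cos(\mu_{k,j}t),\ \mu_{k,j}H(\mu_{k,j}^2)\sin(\mu_{k,j}t)\bigr),
\]
so the orthogonality relation above reads $(\hat N,\hat W)\perp\Phi_k^{(j)}$ in the natural inner product. From $G(\rho^2)\sim(p-1)\cos(\rho\pi)\sin^{p-2}(\rho\pi)/\rho^{p-2}$, $\rho H(\rho^2)\sim\sin^{p-1}(\rho\pi)/\rho^{p-2}$, together with $\sin(\mu_{k,0}\pi)=O(1/k)$, $\cos(\mu_{k,0}\pi)=\pm1+O(1/k^2)$ and the mirror behaviour at $\mu_{k,1}$, one finds that at $\mu_{k,0}$ the first coordinate of $\Phi_k^{(0)}$ dominates the second by a factor of order $k$, while at $\mu_{k,1}$ the second dominates the first similarly. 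Dividing each $\Phi_k^{(j)}$ by its dominant coordinate yields a normalized family $\widehat\Phi_k^{(0)},\widehat\Phi_k^{(1)}$ that is quadratically close in $L^2(0,\pi)^2$ to $(\cos(kt),0)$ and $(0,\sin((k-\tfrac12)t))$ respectively. By (ii) the systems $\{1\}\cup\{\cos(\mu_{k,0}t)\}$ and $\{\sin(\mu_{k,1}t)\}$ are Riesz bases of $L^2(0,\pi)$, so $\{(1,0)\}\cup\{(\cos(\mu_{k,0}t),0)\}\cup\{(0,\sin(\mu_{k,1}t))\}$ is a Riesz basis of $L^2(0,\pi)^2$; this reference system is quadratically close to $\{(1,0)\}\cup\{\widehat\Phi_k^{(0)},\widehat\Phi_k^{(1)}\}$, and Bari's theorem then promotes the latter to a Riesz basis of $L^2(0,\pi)^2$.

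By \eqref{E36-1} applied to both $N_p$ and $\tilde N_p$ we have $\int_0^\pi\hat N(t)\,dt=0$, i.e.\ $(\hat N,\hat W)\perp(1,0)$. Combined with the orthogonality relation of the first paragraph, $(\hat N,\hat W)$ is perpendicular to an entire Riesz basis of $L^2(0,\pi)^2$, forcing $\hat N=\hat W=0$ and in particular $N_p=\tilde N_p$. Returning to the derivation preceding \eqref{E37} and setting $\rho=n\in\mathbb{N}$ (so that $\sin(n\pi)=0$) collapses the cosine Fourier coefficient of $N_p$ to
\[
\int_0^\pi N_p(t)\cos(nt)\,dt=\Bigl(\sum_{k=1}^{n_p}\sin(na_{k,p})\Bigr)\int_0^\pi q_p(t)\sin n(\pi-t)\,dt.
\]
Subtracting the analogous identity for $\tilde q_p$, the left side vanishes, while condition (iii) guarantees the bracketed coefficient is nonzero for every $n$; hence every Fourier sine coefficient of $s\mapsto q_p(\pi-s)-\tilde q_p(\pi-s)$ vanishes, and completeness of $\{\sin(nt)\}_{n\ge 1}$ in $L^2(0,\pi)$ yields $q_p=\tilde q_p$. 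The main obstacle is the vector-valued Riesz-basis step: $G$ and $\mu_{k,j}H$ vanish to different orders on the two sequences and these orders depend on $p$, so choosing a normalization that simultaneously controls both coordinates and produces a genuine Riesz basis (not merely a complete or minimal system) is the technical crux; conditions (i) and (ii) are exactly what make this normalization well-defined and the reference basis available.
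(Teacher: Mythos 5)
Your argument is, at its core, the paper's own: the same representations \eqref{E37}--\eqref{E38} via $N_p,W_p$, the same factorization $\triangle_{G_p}=\triangle^{(0,0)}_{q_p}\triangle_{G_{p-1}}+\triangle^{(0,1)}_{q_p}\prod_{l<p}\triangle^{(0,0)}_{q_l}$ from Theorem \ref{T2.4}, the same vector Riesz-basis mechanism in $L^2(0,\pi)\oplus L^2(0,\pi)$, and the same endgame (evaluation at integer $\rho$, condition (iii), completeness of sines). Indeed, since at each eigenvalue $\triangle^{(0,0)}_{q_p}G+\triangle^{(0,1)}_{q_p}H=0$, your normalized vectors satisfy $\mu_{k,j}H(\mu_{k,j}^2)/G(\mu_{k,j}^2)=-\mu_{k,j}/g_{k,j}$ with $g_{k,j}$ as in \eqref{Egk}, so $\widehat\Phi_k^{(0)},\widehat\Phi_k^{(1)}$ are literally the paper's $V_{k,0},V_{k,1}$. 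Recasting the moment-problem determination of $(N_p,W_p)$ as a difference/orthogonality argument for $(\hat N,\hat W)$ is a cosmetic change (and your explicit use of \eqref{E36-1} to pair against $(1,0)$ is tidier than the paper's silent insertion of $V_{0,0}$), and your nonvanishing argument for $G$ and $H$ under either alternative of (i) is correct.

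The genuine gap is in how you justify quadratic closeness. You derive the dominance estimates from the leading asymptotics $G(\rho^2)\sim(p-1)\cos(\rho\pi)\sin^{p-2}(\rho\pi)/\rho^{p-2}$ and $\rho H(\rho^2)\sim\sin^{p-1}(\rho\pi)/\rho^{p-2}$. These give \emph{upper} bounds, but at $\mu_{k,0}=k+O(1/k)$ you need a \emph{lower} bound on $|G(\mu_{k,0}^2)|$ of order $k^{-2(p-2)}$ to conclude $|\mu_{k,0}H/G|=O(1/k)$, and no such bound follows: the main term of $G$ carries the factor $\sin^{p-2}(\rho\pi)$, which is only $O(1/k)$ per factor and can be far smaller — nothing in the hypotheses excludes $\mu_{k,0}\in\mathbb{Z}$ for infinitely many $k$, in which case the displayed terms of the expansion \eqref{E45-1}--\eqref{E45-2} for $G$ vanish or degenerate, and (i) supplies only qualitative nonvanishing, not the quantitative, square-summable control that quadratic closeness requires. (For $p=2$ your route happens to work because $G=\triangle^{(0,1)}_{q_1}$ has a nondegenerate main term at $\mu_{k,0}$; the failure is for $p\ge3$, and symmetrically one should worry about $\mu_{k,1}$.) The paper avoids this trap by estimating the ratio through the $p$-th edge: $g_{k,j}=\triangle^{(0,1)}_{q_p}(\mu_{k,j}^2)/\triangle^{(0,0)}_{q_p}(\mu_{k,j}^2)$, whose numerator and denominator have the controlled forms $\cos(\mu\pi)+\gamma_{k,j}/k$ and $\sin(\mu\pi)/\mu+\kappa_{k,j}/k^2$ with $\ell^2$ remainders, so one needs only $|\cos(\mu_{k,0}\pi)|$ bounded below and $|\sin(\mu_{k,0}\pi)|=O(1/k)$ above (mirrored at $\mu_{k,1}$), both guaranteed by the stated asymptotics of $\mu_{k,j}$. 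Since you already have the identity $\triangle^{(0,0)}_{q_p}G+\triangle^{(0,1)}_{q_p}H=0$ at the eigenvalues, the repair is immediate: write $\mu_{k,j}H/G=-\mu_{k,j}\triangle^{(0,0)}_{q_p}/\triangle^{(0,1)}_{q_p}$ and estimate via \eqref{E37}--\eqref{E38}. One further caveat, which you share with the paper: quadratic closeness to a Riesz basis yields a Riesz basis only in conjunction with completeness (equivalently $\omega$-independence); for your uniqueness argument completeness is the property actually used, and neither your text nor the paper verifies it beyond the closeness claim.
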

\begin{proof}
The proof can be achieved by following the arguments in \cite{Bon2018} and \cite{ST}.
Denote $Q_p=(q_p,F_p),$
We write
\begin{equation*}
\begin{cases}
\Delta_{q_p}^{(0,0)}(\lambda)&=\dfrac{\sin\rho\pi}{\rho}+\int_0^\pi N_p(x)\dfrac{\cos (\rho x)}{\rho^2}\, dx, \\
\Delta_{q_p}^{(0,1)}(\lambda)&=\cos \rho\pi+\int_0^\pi W_p(x) \dfrac{\sin (\rho x)}{\rho}\, dx. 
\end{cases}
\end{equation*}
Since $\{\mu^2_{k,j}\}_{k=1}^\infty,\; j=0,1$ are eigenvalues of $L_{G_p}(Q_1,Q_2,\dots, Q_p),$
\begin{equation*}
	\Delta_{G_p}(\mu_{k,j}^2)=\sum_{k=1}^p\triangle_{q_k}^{(0,1)}(\mu_{k,j}^2)\prod_{j=1,j\ne k}^p\triangle_{q_j}^{(0,0)}(\mu_{k,j}^2)=0	
\end{equation*}	
By assumption (i), we have
	\begin{equation}\label{Egk}
	\dfrac{\Delta_{q_p}^{(0,1)}(\mu_{k,j}^2)}{\Delta_{q_p}^{(0,0)}(\mu_{k,j}^2)}=-\sum_{l=1}^{p-1}\dfrac{\Delta_{q_l}^{(0,1)}(\mu_{k,j}^2)}{\Delta_{q_l}^{(0,0)}(\mu_{k,j}^2)}:=g_{k,j}.
	\end{equation} 
Hence,
\begin{equation}\label{E40}
	\dfrac{g_{k,j}}{\mu_{k,j}^2}\int_0^\pi N_p(x)\cos( \mu_{k,j} x)\, dx-\dfrac{1}{\mu_{k,j}}\int_0^\pi W_p(x)\sin (\mu_{k,j} x)\, dx =-g_{k,j}\dfrac{\sin (\mu_{k,j}\pi)}{\mu_{k,j}}+\cos(\mu_{k,j} \pi).
		\end{equation}
According to assumption (i), $g_{k,j}\ne0.$ Assumption (ii) and \eqref{E37} give that 
\begin{equation*}
	g_{k,0} =\dfrac{\cos(\mu_{k,0}\pi)+\gamma_{k,0}/k}{\sin(\mu_{k,0}\pi)/\mu_{k,0}+\kappa_{k,0}/k^2}=O(k^2), 
	\end{equation*}
similarly,  	
\begin{equation*}
	 g_{k,1}=\dfrac{\cos(\mu_{k,1}\pi)+\gamma_{k,1}/k}{\sin(\mu_{k,1}\pi)/\mu_{k,1}+\kappa_{k,1}/k^2}=O(1), 
\end{equation*}
where $\{\gamma_{k,j}\}_{k=1}^\infty\in l^2 ,\; \{\kappa_{k,j}\}_{k=1}^\infty \in l^2. $  This leads to 
\begin{align*}
	g_{k,0}\left(\frac{\sin(\mu_{k,0}\pi)}{\mu_{k,0}}+\frac{\kappa_{k,0}}{k^2}\right)=\cos(\mu_{k,0}\pi)+\frac{\gamma_{k,0}}{k},
\end{align*}
henceforth
\begin{equation}
	\dfrac{u_{k,0}^2}{g_{k,0}}\cos\mu_{k,0}\pi-\mu_{k,0}\sin\mu_{k,0}\pi=\dfrac{\kappa_{k,0}\mu_{k,0}^2}{k^2}-\dfrac{\mu_{k,0}^2\gamma_{k,0}}{kg_{k,0}}.
	\end{equation}
This implies $$\{-\mu_{k,0}{\sin \mu_{k,0}\pi}+\dfrac{\mu^2_{k,0}}{g_{k,0}}\cos\mu_{k,0} \pi\}_{k=1}^\infty\in l^2.$$ 
Similarly, 	
$$\{-g_{k,1}\sin \mu_{k,1}\pi+\mu_{k,1}\cos\mu_{k,1} \pi\}_{k=1}^\infty \in l^2.$$
Then \eqref{E40} turns 
	\begin{align}
\begin{cases}
&\int_0^\pi N_p(x)\cos( \mu_{k,0} x)\, dx-\dfrac{\mu_{k,0}}{g_{k,0}}\int_0^\pi W_p(x)\sin (\mu_{k,0} x)\, dx =-\mu_{k,0}{\sin (\mu_{k,0}\pi)}+\dfrac{\mu^2_{k,0}}{g_{k,0}}\cos(\mu_{k,0} \pi), \\
&\dfrac{g_{k,1}}{\mu_{k,1}}\int_0^\pi N_p(x)\cos( \mu_{k,1} x)\, dx-\int_0^\pi W_p(x)\sin (\mu_{k,1} x)\, dx =-g_{k,1}\sin (\mu_{k,1}\pi)+\mu_{k,1}\cos(\mu_{k,1} \pi).
	\end{cases}
\end{align}
Denote 
\begin{equation*}
{V}_{0,0}(x)=\begin{bmatrix}
1\\0 	
\end{bmatrix},\; V_{k,0}(x)= \left(\begin{bmatrix}  \cos \mu_{k,0} x \\ -\dfrac{\mu_{k,0}}{g_{k,0}}\sin \mu_{k,0} x\end{bmatrix}\right),
\text { for } k\in \mathbb N
\end{equation*}
and 
\begin{equation*}
	V_{k,1}=  \left(\begin{bmatrix} -\dfrac{g_{k,1}}{\mu_{k,1}}\cos \mu_{k,1} x\\ \sin \mu_{k,1} x\end{bmatrix}\right), \text{ for } k\in \mathbb N.
	\end{equation*}
By assumption (ii),
	$$ \mathcal{B}:=\{\begin{bmatrix}1 \\ 0\end{bmatrix},\;\; \begin{bmatrix}\cos \mu_{k,0}x  \\ 0\end{bmatrix}_{k\in \mathbb N}, \;\;\begin{bmatrix}0 \\ \sin \mu_{k,1}x \end{bmatrix}_{k\in \mathbb N}\}$$
is a  Riesz base for $L^2(0,\pi)\oplus L^2(0,\pi)$ and  
$$\{V_{k,0}(x)\}_{k=0}^{\infty}\cup \{V_{k,1}(x)\}_{k=1}^{\infty}$$
is quadratically closed to $\mathcal{B},$ hence it forms a Riesz base for $ L^2(0,\pi)\oplus L^2(0,\pi).$ From \eqref{E40}, 
$\begin{bmatrix}N_p(x)\\ W_p(x)\end{bmatrix}$ can be determined uniquely. As $N_p(x)$ is obtained, we have
\begin{align}\label{E42}
\Delta^{(0,0)}_{q_p}(\lambda)&=\dfrac{\sin \rho \pi}{\rho}+\int_0^\pi N_p(x)\dfrac{\cos \rho x}{\rho^2 }\, dx \\
	&=\dfrac{\sin \rho \pi}{\rho}-\dfrac{\sin \rho \pi }{\rho^2}\sum_{k=1}^{n_p}\int_0^{a_{k,p}}q_p(t)\sin \rho(a_{k,p}-t)\, dt +(\dfrac{\sum_{k=1}^{n_p}\sin\rho a_{k,p}}{\rho^2})\int_0^\pi q_p(t)\sin \rho(\pi- t)\, dt. \notag
	\end{align}
By assumption (iii), substitute $\rho$ in \eqref{E42} with $n\in \mathbb N,$  we obtain the Fourier coefficients of $q_p $ in sine series, that allows us to reconstruct $q_p(t).$
\end{proof}

\noindent{\bf Remarks:} \\
1. Note that assumption (ii) is automatically satisfied if the two sequences $\{\mu_{k,j}^2\}_{k=1}^\infty,\; j=1,2, $ of eigenvalues are all real (refer \cite{He2001, RO2004} or  [23, Appendix D, p.163]).\\
2. If the Sturm–Liouville operator on \( e_p \) is ordinary, then one can follow the arguments in \cite{Bon2018} to obtain Weyl's function of the Sturm-Liouville equation on $e_p,$ this leads to  uniqueness and  reconstruction algorithm for this case. We omit the details here.\\

\noindent {\bf Algorithm for reconstruction for $q_p(x)$-nonlocal case.} 
Given the potentials $(Q_1,Q_2,\dots,Q_{p-1})$, the set of frozen arguments $F_p=\{a_{k,p}\}_{k=1}^{n_p}$ on edge $e_p$  and the sequences of eigenvalues  $\{\mu_{k,j}^2\}_{k\in \mathbb N},$  $j=0,1,$ the goal is to determine $q_p(t).$ The computation follows these steps: 
\begin{itemize}
\item[1.] Compute the sequences  $\{g_{k,j}\}_{k\in \mathbb N}$ for $j=0,1 $ by  equation \eqref{Egk}.
\item[2.]Solve equation \eqref{E40} to obtain the vector function:  $$\begin{bmatrix}N_p(x)\\ W_p(x)\end{bmatrix}.$$ 
\item[3.]Compute the Fourier coefficients $c_n$ using equation \eqref{E42}, i.e.,
  $$ c_n=\int_0^\pi q_p(t)\sin n(\pi-t)\, dt=\dfrac{1}{\sum_{k=1}^{n_p}\sin n a_{k,p}} \int_0^\pi N_p(x)\cos n x \, dx. $$
\item[4.] Reconstruct $q_p(t)$ using the Fourier series expansion:
    $$ q_p(t)=\sum_{k=1}^\infty \dfrac{2c_n}{\pi}\sin n(\pi-t) .$$
\end{itemize}

\end{document}